\newtheorem{theorem}{Theorem}[section]
\newtheorem{proposition}[theorem]{Proposition}
\newtheorem{corollary}[theorem]{Corollary}
\newtheorem{remark}[theorem]{Remark}
\title[Isoperimetric Problems]{The Sharp Log-Sobolev Inequality on a Compact Interval}  
\author[W. Ghang]{Whan Ghang}
\author[Z. Martin]{Zane Martin}
\author[S. Waruhiu]{Steven Waruhiu}
\date{\today} 
\begin{document}
\maketitle

\begin{abstract}
We provide a proof of the sharp log-Sobolev inequality on a compact interval.
\end{abstract}


\section{Introduction}
\label{intro} 
The Gaussian log-Sobolev inequality, due to A. J. Stam \cite[1959,  Eqn. 2.3]{stam} or Paul Federbush \cite[1969, Eqn. (14)]{federbush}, although often attributed to L. Gross \cite[1975, Cor. 4.2]{gross_gauss}, played a crucial role in Perelman's \cite[2002]{perelman} proof of the Poincar\'{e} Conjecture. We consider log-Sobolev inequalities for finite Lebesgue measure. F. Maggi \cite[2009]{Morgbl} observed that the sharp log-Sobolev inequality on the interval follows from an isoperimetric conjecture of D\'{i}az et al. \cite[2010]{g09}, which remains open, but provided no proof. We found it in Wang \cite[1999]{wang}, who cited Deuschel and Stroock \cite[1990]{deuschel}, who gave a proof of the sharp log-Sobolev inequality on the circle. We then traced this result back to Emery and Yukich \cite[1987, p. 1]{emery&yukich}, Rothaus \cite[1980, Thm. 4.3]{rothaus_1980}, and Weissler \cite[1980, Thm. 1]{weissler}.  Our Theorem 2.2 shows that the interval case follows quickly from the circle case.

\subsection{Acknowledgements}
We thank our advisor Frank Morgan for his patience and invaluable input. We thank our friend Andrew Kelly for substantial contributions to this paper. We thank the National Science Foundation for grants to Professor Morgan and the Williams College "SMALL" Research Experience for Undergraduates, and Williams College for additional funding. We thank the Mathematical Association of America, MIT, the University of Chicago, and Williams for supporting our trip to talk at MathFest 2012.


\setcounter{equation}{0}
\section{Log-Sobolev Inequality on a Compact Interval}

\label{maggi} 

In considering the isoperimetric problem in sectors of the plane with density $ r^p$, D\'{i}az et al. \cite[Cor. 4.24,  Conj. 4.18]{g09} conjectured the following inequality:

\begin{equation}
\label{eq:g09inequality}
\left[ \int_0^1 r^q \;d\alpha \right]^{1/q} \leq \int_0^1 \sqrt{r^2 + (q-1) \frac{r'^2}{\pi^2}} \;d\alpha,
\end{equation}
where $1<q\leq2$. F. Maggi \cite{Morgbl} observed that (\ref{eq:g09inequality}) implies the log-Sobolev inequality of Theorem \ref{logSobolev_Unit_interval} below. Here we observe that Theorem \ref{logSobolev_Unit_interval} follows from the following proposition of Weissler.
\newline

\begin{proposition}
\label{logSobolev_Circle}
\cite[Thm. 1]{weissler} Let $f$ be a non-negative $C^1$ function on the circle $S^1$ of length 1. Suppose $\int_{S^1} f^2 = 1$. Then we have the following sharp inequality:
$$
4\pi^2 \int_{S^1} f^2 \log f \leq \int_{S^1} f'^2.
$$
\end{proposition}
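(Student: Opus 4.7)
The plan is to realize the inequality as a variational statement and identify the constant function $f \equiv 1$ as the unique minimizer of
\[
J(f) := \int_{S^1} (f')^2 \, dx - 4\pi^2 \int_{S^1} f^2 \log f \, dx
\]
on $\mathcal{C} := \{f \in C^1(S^1) : f \geq 0, \int_{S^1} f^2 = 1\}$. Since $J(1) = 0$, the claim is equivalent to $\inf_{\mathcal{C}} J = 0$, attained only for constants.

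First I would establish existence of a minimizer by the direct method. The functional extends to $H^1(S^1)$, where the Dirichlet part is weakly lower semicontinuous and the entropy term is continuous under the compact embedding $H^1(S^1) \hookrightarrow L^p$ for any $p < \infty$. Coercivity follows from the Poincar\'e inequality $4\pi^2 \int_{S^1} (f - \bar f)^2 \leq \int_{S^1} (f')^2$ together with crude bounds on $\int f^2 \log f$ in terms of $\|f\|_\infty$, itself controlled in $H^1$ by Sobolev embedding on $S^1$. Standard truncation arguments handle non-negativity and the mild singularity of $x \log x$ at $0$. The Euler--Lagrange equation for a minimizer reads
\[
-f'' = 4\pi^2 f \log f + (2\pi^2 + \mu) f
\]
for a Lagrange multiplier $\mu$, and $f \equiv 1$ is a solution with $\mu = -2\pi^2$.

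The main obstacle, and the technical heart of the proof, is to show that every minimizer is constant. To this end I would apply the symmetric decreasing rearrangement on $S^1$, which preserves both $\int f^2$ and $\int f^2 \log f$ while not increasing $\int (f')^2$ by P\'olya--Szeg\"o; this reduces the problem to bell-shaped $f$ symmetric about a single point, hence to an ODE on the half-period $[0, 1/2]$ with Neumann boundary data. Multiplying the Euler--Lagrange equation by $f'$ yields a first integral $(f')^2 = \Phi(f)$; a phase-plane analysis should then show that every non-constant periodic orbit of this ODE has period strictly greater than $1$, so only the constant solution closes up on $S^1$ of length $1$. The constant $4\pi^2$ is precisely the threshold at which non-constant periodic orbits first appear, which is the source of sharpness. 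As an alternative, one could follow Weissler's original route: prove hypercontractivity $\|T_t f\|_{q(t)} \leq \|f\|_p$ for the heat semigroup $T_t = e^{t\Delta}$ with $q(t) = 1 + (p-1) e^{4\pi^2 t}$, and extract the inequality by differentiating at $t = 0$ via Gross's equivalence theorem; there the technical weight simply migrates onto proving hypercontractivity itself.
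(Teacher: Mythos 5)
Note first that the paper does not prove this proposition at all: it is quoted from Weissler, and Section~3 only \emph{summarizes} four proofs from the literature (Weissler via Fourier expansion, Rothaus via a variational method, Emery--Yukich via the Brownian semigroup, Deuschel--Stroock). Your proposal is essentially a sketch of Rothaus's variational route, and the scaffolding (direct method, coercivity via the crude bound $\int f^2\log f\le\log\|f\|_\infty$ and Sobolev embedding, the Euler--Lagrange equation $-f''=4\pi^2 f\log f+(2\pi^2+\mu)f$, rearrangement to reduce to a symmetric profile) is all plausible. But the decisive step is left as an assertion: ``a phase-plane analysis should then show that every non-constant periodic orbit of this ODE has period strictly greater than $1$.'' That is not a routine verification --- it is the entire content of the theorem, and there is a concrete reason it cannot follow from a naive phase-plane or linearization argument. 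Linearizing the Euler--Lagrange equation about the constant solution $f\equiv 1$ (with $\mu=-2\pi^2$) gives $g''=-4\pi^2 g$, whose period is \emph{exactly} $1$. So at the sharp constant $4\pi^2$ the problem is degenerate: whether nearby non-constant orbits have period greater or less than $1$ is decided by higher-order terms, and the strict inequality you need is precisely what must be proved. This is why Rothaus does not attack the critical constant directly: he proves, for every subcritical constant $2/\rho$ with $\rho>1/2\pi^2$ (where the linearized period strictly exceeds $1$ and uniqueness of the positive solution can be established), that the only positive solution is $f\equiv 1$, and then obtains the sharp case by letting $\rho\to 1/2\pi^2$. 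Your outline has no substitute for that limiting argument.

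Two smaller gaps: you need the minimizer to be strictly positive before you may write $\log f$ in the Euler--Lagrange equation (Rothaus proves positivity separately; ``standard truncation'' does not obviously deliver it), and the P\'olya--Szeg\"o step on $S^1$ should be justified (it does hold, by the coarea argument, since every level set of a non-constant periodic function has at least two boundary points, matching the rearranged arc). The alternative you mention --- hypercontractivity of the heat semigroup with $q(t)=1+(p-1)e^{4\pi^2 t}$ plus Gross's equivalence --- is likewise only named, and as you concede it relocates rather than removes the difficulty. As written, the proposal is an outline of a known strategy with its hardest step assumed.
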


Various proofs are discussed in Section 3.

\setcounter{equation}{0}
\begin{theorem}
\label{logSobolev_Unit_interval}
Let $f$ be a non-negative $C^1$ function on the interval [0,1]. Suppose $\int_0^1 f^2 = 1$. Then we have the following inequality:

\begin{equation}
\label{eq:log_sob_eqn_unit_inter}
\pi^2 \int_0^1 f^2 \log f \leq \int_0^1 f'^2. 
\end{equation}
\end{theorem}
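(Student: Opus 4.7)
The plan is to reduce the interval inequality to the circle inequality of Proposition \ref{logSobolev_Circle} by a symmetric doubling construction. Given $f$ on $[0,1]$, I would define $g$ on the circle $S^1$ of length $1$ (parametrized as $[0,1]$ with endpoints identified) by reflecting across the midpoint:
\begin{equation*}
g(x) = \begin{cases} f(2x) & 0 \leq x \leq 1/2, \\ f(2-2x) & 1/2 \leq x \leq 1. \end{cases}
\end{equation*}
Since $g(0)=g(1)=f(0)$ and the two pieces agree at $x=1/2$ with common value $f(1)$, the function $g$ is continuous on $S^1$ and piecewise $C^1$, hence Lipschitz.

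Next I would record three computations, each by the substitution $u = 2x$ (resp. $u = 2-2x$) and using the symmetry of the construction:
\begin{equation*}
\int_{S^1} g^2 = \int_0^1 f^2 = 1, \qquad \int_{S^1} g^2 \log g = \int_0^1 f^2 \log f, \qquad \int_{S^1} g'^2 = 4\int_0^1 f'^2.
\end{equation*}
The factor of $4$ in the last identity comes from the chain rule (one factor of $2$) squared, offset by one factor of $1/2$ from the change of variables and a factor of $2$ from symmetry. Applying Proposition \ref{logSobolev_Circle} to $g$ would then yield
\begin{equation*}
4\pi^2 \int_0^1 f^2 \log f = 4\pi^2 \int_{S^1} g^2 \log g \;\leq\; \int_{S^1} g'^2 = 4\int_0^1 f'^2,
\end{equation*}
and dividing by $4$ gives the desired inequality \eqref{eq:log_sob_eqn_unit_inter}.

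The one subtle point, and in my view the main obstacle, is that $g$ is only Lipschitz: the one-sided derivatives at $x=0$ and $x=1/2$ are $\pm 2 f'(0)$ and $\pm 2 f'(1)$ respectively, so $g$ need not be $C^1$ on the circle, while Proposition \ref{logSobolev_Circle} is stated for $C^1$ functions. I would handle this by an approximation argument: multiply $f$ by a smooth cutoff (or mollify near the endpoints) to produce a sequence $f_n \to f$ in $C^1([0,1])$ with $f_n'(0) = f_n'(1) = 0$ and $\int_0^1 f_n^2 = 1$; the corresponding $g_n$ are genuinely $C^1$ on $S^1$, so Proposition \ref{logSobolev_Circle} applies, and both sides of the inequality for $f_n$ converge to the corresponding sides for $f$, proving \eqref{eq:log_sob_eqn_unit_inter} in the limit.
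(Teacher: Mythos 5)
Your proof is correct and is essentially the same as the paper's: the same reflection-doubling $g(x)=f(2x)$, $f(2-2x)$, the same three integral identities, and an appeal to Proposition \ref{logSobolev_Circle} followed by division by $4$. Your careful treatment of the fact that $g$ is only piecewise $C^1$ is exactly the point the paper dispatches with the phrase ``by smoothing,'' so your approximation argument is a welcome elaboration rather than a departure.
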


\begin{proof}
Let $f$  be any non-negative $C^1$ function on $[0,1]$ such that $\int_0^1 f^2 = 1$. Define a non-negative piecewise $C^1$ function $g$ on $S^1$ such that 
$$
g(x) =
\begin{cases}
f(2x), & \text{if} \;0\leq x\leq \frac{1}{2} \\
f(2-2x), & \text{if} \;\frac{1}{2}<x\leq 1.
\end{cases}
$$
Then $\int_{S^1}g^2 =1$. By smoothing, Proposition \ref{logSobolev_Circle} applies to $g$.  By simple computation, we have that 
$$
\int_{S^1} g^2 \log g = \int_{0}^{1} f^2 \log f 
$$ 
and 
$$
\int_{S^1} g'^2 =4\int_{0}^{1} f'^2.
$$
The conclusion follows.
\end{proof}

\setcounter{equation}{0}
\begin{remark}
\label{wang_perturbation}
\emph{Feng-Yu Wang \cite[Ex. 1.2]{wang} suggested an alternative proof of 2.2(\ref{eq:log_sob_eqn_unit_inter}), but we don't understand his proof. He considered densities $C_\epsilon \exp(\epsilon \cos \pi x)$ and functions $f_\epsilon =\exp(-\epsilon \cos \pi x)$, with $C_\epsilon$ chosen to make the integral of $f_\epsilon ^2$ equal to 1. Then $f_\epsilon$ satisfies the differential equation
\begin{equation}
\label{differential_eqn_lsf}
f_{\epsilon}''-\pi \epsilon \sin \pi x f'_{\epsilon}=-\pi^2 f_{\epsilon} \log f_{\epsilon}.
\end{equation}
He said that it follows that 2.2(\ref{eq:log_sob_eqn_unit_inter}) holds for those functions and densities with sharp constant $\pi^2$. This might follow if it were known that functions realizing equality exist, but Wang himself \cite[p. 655]{wang} admits that "the author is not sure yet whether there always exists [such a function]." Indeed, in the case of the circle with unit density, there apparently is no such function. Of course, the sharp inequality for density 1 would follow as $\epsilon$ approaches 0.}
\end{remark}

A similar result holds on any interval for a function with root mean square $m$:
\setcounter{equation}{0}

\begin{corollary}
\label{logSobolev_arbitary_interval}
Let $f$ be a non-negative $C^1$ function on the interval [$a,b$]. Suppose \[ \frac{1}{b-a}\int_a^b f^2 = m^2 \]$(m>0).$ Then we have the following inequality:
\begin{equation}
\label{log_sob_eqn_arbitrary_interval}
\frac{\pi^2}{(b-a)^2}\left(\int_a^b f^2 \log f \ -(b-a)m^2 \log m\right) \leq \int_a^b f'^2.
\end{equation}
\end{corollary}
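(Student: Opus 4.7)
The plan is to reduce Corollary \ref{logSobolev_arbitary_interval} to Theorem \ref{logSobolev_Unit_interval} by an affine change of variable together with a scalar normalization, thereby mapping an arbitrary function on $[a,b]$ with root mean square $m$ to a function on $[0,1]$ with $L^2$-norm $1$.

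Concretely, given $f$ as in the hypothesis, I would define $h\colon[0,1]\to[0,\infty)$ by
\[
h(y) \;=\; \frac{1}{m}\,f\bigl(a+(b-a)y\bigr).
\]
A direct change of variable gives $\int_0^1 h(y)^2\,dy=\frac{1}{m^2(b-a)}\int_a^b f^2\,dx = 1$, so Theorem \ref{logSobolev_Unit_interval} applies to $h$ and yields $\pi^2\int_0^1 h^2\log h\le \int_0^1 h'^2$.

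Next I would rewrite each side in terms of $f$. Since $h'(y)=\frac{b-a}{m}f'\bigl(a+(b-a)y\bigr)$, the change of variable gives $\int_0^1 h'^2\,dy=\frac{b-a}{m^2}\int_a^b f'^2\,dx$. For the entropy term, I would use $\log h = \log f - \log m$, so that
\[
\int_0^1 h^2\log h\,dy \;=\; \frac{1}{m^2(b-a)}\int_a^b f^2\log f\,dx \;-\; \frac{\log m}{m^2(b-a)}\int_a^b f^2\,dx,
\]
and the hypothesis $\int_a^b f^2 = (b-a)m^2$ simplifies the last term to $\log m$. Plugging both expressions into the inequality for $h$ and multiplying through by $m^2/(b-a)$ produces exactly \eqref{log_sob_eqn_arbitrary_interval}.

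There is really no substantive obstacle: the entire argument is bookkeeping for the scaling and for the additive shift of $\log$. The only point that warrants care is the appearance of $\log m$, which accounts for the subtracted term $(b-a)m^2\log m$ in the statement; everything else is a routine computation from the dilation $x=a+(b-a)y$.
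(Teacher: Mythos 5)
Your proposal is correct and follows essentially the same route as the paper: the identical rescaling $h(y)=\frac{1}{m}f(a+(b-a)y)$, verification that $\int_0^1 h^2=1$, application of Theorem \ref{logSobolev_Unit_interval}, and the same bookkeeping for the derivative and entropy terms (including the $\log m$ shift). Your computations of $\int_0^1 h'^2=\frac{b-a}{m^2}\int_a^b f'^2$ and of the entropy integral are in fact stated more cleanly than in the paper, which contains a couple of typographical slips in the intermediate displays.
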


\begin{proof}
Let $f$ be a non-negative $C^1$ function on the interval $[a,b]$ such that 
\[
\frac{1}{b-a}\int_a^b f^2 = m^2>0 
\]  
$(m>0).$ Define a function $g$ on the interval $[0,1]$ as 
$$
g(x)=\frac{1}{m}f((b-a)x+a).
$$
Then $g$ is non-negative and $C^1$. Moreover, we have 
$$
\int_0^1 g(x)^2 \;dx= \int_0^1 \frac{1}{m^2}f((b-a)x+a)^2 dx =\frac{1}{(b-a)m^2} \int_0^a f(y)^2 dy =1.
$$
Therefore, we can apply Theorem \ref{logSobolev_Unit_interval} to the function $g$.  We have
\begin{equation}
\label{log_sob_unit_to_arbitrary}
\frac{\pi^2}{b-a}\int_0^1 g^2 \log g \leq (b-a)\int_0^1 g'^2.
\end{equation}
Note that 
$$
g'(x)=\frac{b-a}{m}f((b-a)x+a).
$$  
By direct calculation, we have
$$
\int_0^1 g'(x)^2dx = \frac{(b-a)^2}{m^2}\int_0^1 f'((b-a)x+a)^2dx =\frac{(b-a)^2}{m^2}\int_a^b f'(x)^2dx.
$$
We also have that 

\begin{align*}
\int_0^1 g(x)^2\log g(x) \;dx &= \frac{1}{m^2}\int_0^1f((b-a)x+a)^2 \log\frac{f((b-a)x+a)}{m}\;dx\\
&=\frac{1}{(b-a)m^2}\int_a^b f(x)^2 \log\frac{f(x)}{m} \;dx\\
&=\frac{1}{(b-a)m^2}\int_a^b f^2 \left(\log f-\log m \right) \\
&=\frac{1}{(b-a)m^2}\left(\int_a^b f^2 \log f - (b-a)m^2 \log m \right).
\end{align*}
Therefore, by plugging these identities into (\ref{log_sob_unit_to_arbitrary}), we have
$$
\frac{\pi^2}{(b-a)m^2}\left(\int_a^b f^2 \log f -(b-a)m^2 \log m \right)\leq\frac{b-a}{m^2}\int_a^b f'^2.
$$
This is equivalent to the desired inequality (\ref{log_sob_eqn_arbitrary_interval}).
\end{proof}

\setcounter{equation}{0}
Corollary \ref{logSobolev_arbitary_interval} can be written in the following form:

\begin{corollary}
Let $f$ be a non-negative $C^1$ function on the interval [$a,b$]. Suppose 
$$
\frac{1}{b-a}\int_a^b f = m>0.
$$
Then we have the following inequality:
$$
\frac{2\pi^2}{(b-a)^2}\left(\int_a^b f \log f \ -m \log m\right) \leq \int_a^b \frac{f'^2}{f}.
$$
\end{corollary}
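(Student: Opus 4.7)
The plan is to reduce this corollary to Corollary~\ref{logSobolev_arbitary_interval} by the natural change of variable $h=\sqrt{f}$. Indeed, if we set $h^2=f$ then $h^2\log h = \tfrac12 f\log f$ and $h'^2 = \tfrac{f'^2}{4f}$, so every term in the $L^2$ log-Sobolev inequality converts, up to explicit constants, into the corresponding term of the $L^1$ inequality asserted here.

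First I would assume $f>0$ on $[a,b]$, in which case $h=\sqrt{f}$ is a non-negative $C^1$ function. The hypothesis $\tfrac{1}{b-a}\int_a^b f = m$ becomes $\tfrac{1}{b-a}\int_a^b h^2 = m$, i.e.\ $h$ has root-mean-square $\sqrt{m}$. I would then apply Corollary~\ref{logSobolev_arbitary_interval} to $h$ with the constant $\sqrt{m}$ in place of $m$:
\[
\frac{\pi^2}{(b-a)^2}\left(\int_a^b h^2 \log h \;-\; (b-a)\,m\,\log\sqrt{m}\right) \leq \int_a^b h'^2.
\]
Substituting $h^2\log h = \tfrac12 f\log f$, $m\log\sqrt{m}=\tfrac12 m\log m$, and $h'^2=\tfrac{f'^2}{4f}$, then multiplying through by $4$, yields the stated inequality. (I note in passing that, tracking the factor of $b-a$ through the computation, the $m\log m$ term on the left ought to read $(b-a)\,m\log m$; the omission appears to be a typographical slip.)

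The only subtle point is the regularity of $h=\sqrt{f}$ at the zero set of $f$. I would handle this by applying the argument above to $f_\varepsilon := f+\varepsilon$, which is strictly positive and $C^1$, and then letting $\varepsilon\to 0^+$; the left-hand side passes to the limit by dominated convergence (using that $t\mapsto t\log t$ is bounded below and continuous), while the right-hand side is handled by monotone convergence since $\tfrac{f'^2}{f_\varepsilon}\uparrow \tfrac{f'^2}{f}$ pointwise. If $\int_a^b f'^2/f$ diverges, the inequality is trivial. Thus the only real work is the algebraic verification of the substitution, and the mild analytic obstacle lies in justifying the passage to $\sqrt{f}$ at zeros of $f$.
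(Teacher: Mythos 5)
Your proof is correct and takes exactly the paper's route: substitute $g=\sqrt{f}$ into Corollary~\ref{logSobolev_arbitary_interval} (the paper does this in one line, glossing over the regularity of $\sqrt{f}$ at zeros of $f$, which your $\varepsilon$-regularization handles cleanly). You are also right that the $m\log m$ term in the statement should carry a factor of $(b-a)$; as printed the inequality is dimensionally consistent only when $b-a=1$.
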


\begin{proof}
Define a non-negative piecewise $C^1$ function $g$ on the interval [$a,b$] as $g=\sqrt{f}$.
Plugging $g$ into Corollary \ref{logSobolev_arbitary_interval} yields the desired result.
\end{proof}

\begin{proposition} 
In Theorem \ref{logSobolev_Unit_interval}, $\pi^2$ is the best possible constant.
\end{proposition}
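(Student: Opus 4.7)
The strategy is to exhibit a one-parameter family of admissible functions $f_\epsilon$ for which the ratio $\int_0^1 (f_\epsilon')^2 \big/ \int_0^1 f_\epsilon^2 \log f_\epsilon$ tends to $\pi^2$ as $\epsilon \to 0^+$. The natural candidates are small perturbations of the constant function $f \equiv 1$ (the unique constant admissible function, which makes both sides of \eqref{eq:log_sob_eqn_unit_inter} vanish) in the direction of the first nonconstant Neumann eigenfunction $\cos(\pi x)$ of $-d^2/dx^2$ on $[0,1]$. This is the direction where Wirtinger's inequality $\pi^2 \int g^2 \leq \int (g')^2$ (for mean-zero $g$) is saturated, so it should also saturate \eqref{eq:log_sob_eqn_unit_inter} to leading order.

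Concretely, I would set
$$
f_\epsilon(x) = c_\epsilon \bigl(1 + \epsilon \cos(\pi x)\bigr), \qquad c_\epsilon = \bigl(1 + \tfrac{\epsilon^2}{2}\bigr)^{-1/2},
$$
so that $\int_0^1 f_\epsilon^2 = 1$, and for $|\epsilon|<1$ each $f_\epsilon$ is strictly positive and $C^1$ on $[0,1]$. Direct differentiation gives
$$
\int_0^1 (f_\epsilon')^2\,dx = c_\epsilon^2\, \epsilon^2 \pi^2 \int_0^1 \sin^2(\pi x)\,dx = \tfrac{\pi^2 \epsilon^2}{2}\bigl(1 + O(\epsilon^2)\bigr).
$$
For the left side, I would use the uniform Taylor expansion $\log(1+\epsilon\cos\pi x) = \epsilon\cos\pi x - \tfrac{\epsilon^2}{2}\cos^2\pi x + O(\epsilon^3)$ together with $\log c_\epsilon = -\tfrac{\epsilon^2}{4} + O(\epsilon^4)$, multiply by the expansion of $f_\epsilon^2$, and integrate. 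All odd powers of $\cos(\pi x)$ integrate to $0$ on $[0,1]$, while $\int_0^1 \cos^2(\pi x)\,dx = \tfrac12$. A short bookkeeping step yields
$$
\int_0^1 f_\epsilon^2 \log f_\epsilon\,dx = \tfrac{\epsilon^2}{2} + O(\epsilon^3),
$$
which is positive for small $\epsilon$, as it must be by Jensen's inequality applied to $t\log t$.

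Combining the two computations,
$$
\frac{\int_0^1 (f_\epsilon')^2\,dx}{\int_0^1 f_\epsilon^2 \log f_\epsilon\,dx} \;=\; \pi^2 + O(\epsilon) \;\longrightarrow\; \pi^2 \quad \text{as } \epsilon \to 0^+.
$$
Hence no constant $C > \pi^2$ can satisfy $C \int_0^1 f^2 \log f \leq \int_0^1 (f')^2$ for all admissible $f$: taking $f = f_\epsilon$ with $\epsilon$ sufficiently small would violate such an inequality. The only real work is the second-order bookkeeping, and the only thing to be careful about is choosing $\epsilon$ small enough that $f_\epsilon > 0$ so that $\log f_\epsilon$ is defined; both are routine.
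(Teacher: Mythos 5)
Your proposal is correct and follows essentially the same route as the paper: the paper's test family $f_\epsilon(x)=\sqrt{1-\epsilon^2}+\sqrt{2}\,\epsilon\cos\pi x$ is, after reparametrizing $\epsilon$, exactly your normalized perturbation $c_\epsilon(1+\epsilon\cos\pi x)$ of the constant function in the $\cos\pi x$ direction, and both arguments conclude by showing the ratio $\int_0^1 f_\epsilon'^2 \big/ \int_0^1 f_\epsilon^2\log f_\epsilon$ tends to $\pi^2$. Your second-order bookkeeping (numerator $\tfrac{\pi^2\epsilon^2}{2}+O(\epsilon^4)$, denominator $\tfrac{\epsilon^2}{2}+O(\epsilon^3)$) checks out and simply makes explicit the ``direct computation'' the paper leaves to the reader.
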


\begin{proof}
For any $0< \epsilon <1$, define
$$
f_{\epsilon}(x)=\sqrt{1-\epsilon^2}+\sqrt{2}\epsilon \cos\pi x.
$$
Then by direct computation, we have
\[\lim_{\epsilon \to 0^{+}}\frac{\int_0^1 f'^2_{\epsilon}}{\int_0^1 f_{\epsilon}^2 \log f_{\epsilon}}=\pi^2. \]
Therefore, the constant $\pi^2$ cannot be replaced by a larger constant.
\end{proof}

\begin{remark}
\emph{The function $\cos\pi x$ comes from the equality case of a Wirtinger inequality which follows from the log-Sobolev inequality \cite{Morgbl}.}
\end{remark}

\section{Proofs of the Sharp Log-Sobolev Inequality on the Circle}
We summarize three proofs of Proposition \ref{logSobolev_Circle} given by Rothaus \cite[Thm. 4.3, 1980]{rothaus_1980}, Weissler \cite[Thm. 1, 1980]{weissler}, Emery and Yukich \cite[p. 1, 1987]{emery&yukich}, and Deuschel and Stroock \cite[Rmk. 1.14(i)]{deuschel}.

\subsection{Weissler's Proof}
Weissler proved a stronger result than Proposition \ref{logSobolev_Circle} by using Fourier expansion of functions of period $2\pi$.

\begin{proposition}\cite[1980, Theorem 1]{weissler}
Let $f(\theta)=\sum_{n=-\infty}^{\infty} a_n e^{in\theta}$ be in $L^2$ and suppose $f(\theta)\geq 0$ almost everywhere. Then
$$
\int f^2 \log f \leq \sum_{n=-\infty}^{\infty} |n||a_n|^2 +||f||_2^2 \log ||f||_2
$$
 
\raggedright in the sense that if the right hand side is finite, then so is the left hand side and the inequality holds. ($0^2 \log 0$ is taken to be $0$.)
\end{proposition}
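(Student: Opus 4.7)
The plan is to deduce the inequality from a hypercontractive bound for the Poisson semigroup on $S^1$, working with $L^2(S^1, d\theta/(2\pi))$ as the underlying Hilbert space. The Poisson rather than the heat semigroup is the natural choice here because the right-hand side $\sum_n |n|\,|a_n|^2$ is the Dirichlet form of its generator $(-\Delta)^{1/2}$, not that of $-\Delta$ which would produce $\sum_n n^2|a_n|^2$. For $0 \le r \le 1$, set
\[
P_r f(\theta) := \sum_n a_n r^{|n|} e^{in\theta},
\]
the harmonic extension of $f$ to the unit disk restricted to the circle of radius $r$. The Poisson kernel $K_r(\theta) = \sum_n r^{|n|} e^{in\theta}$ is a positive convolution kernel, so $P_r$ preserves nonnegativity, and $P_1 f = f$.

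By homogeneity we may normalise $\|f\|_2 = 1$, reducing the claim to $\int f^2 \log f \le \sum_n |n|\,|a_n|^2$. Truncating the Fourier series of $f$ and passing to the limit (Fatou on the right, dominated convergence on the left, with a regularisation $f \mapsto f + \varepsilon$ to keep $\log f$ integrable) then reduces the problem to the case of a strictly positive trigonometric polynomial, for which all sums are finite and termwise differentiation is legal.

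The central technical input is Weissler's hypercontractive estimate
\[
\|P_r f\|_q \le \|f\|_2 \qquad \text{whenever } q \ge 2 \text{ and } r^2(q-1) \le 1,
\]
which is the main theorem of the paper cited and is the main obstacle. Its proof rests on positivity of $K_r$ together with a two-point or subordination argument that transplants hypercontractivity from the Gaussian heat semigroup on $\R$; this is the only non-elementary step.

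Granting hypercontractivity, parametrise the critical curve by $r(q) := (q-1)^{-1/2}$, so that $r(2) = 1$ and $P_{r(2)} f = f$. Set $\Phi(q) := \tfrac1q \log \int (P_{r(q)} f)^q$; then $\Phi(q) \le 0$ for $q \ge 2$ and $\Phi(2) = 0$, forcing $\Phi'(2^+) \le 0$. A direct chain-rule computation evaluates this derivative: the $1/q$ prefactor and the exponent $q$ together contribute $\tfrac12 \int f^2 \log f$ at $q = 2$, while differentiating through $P_{r(q)} f$, using $r'(2) = -\tfrac12$ and $\partial_r P_r f|_{r=1} = \sum_n |n| a_n e^{in\theta}$ together with Parseval, contributes $-\tfrac12 \sum_n |n|\,|a_n|^2$. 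Hence $\Phi'(2^+) \le 0$ reads exactly $\int f^2 \log f \le \sum_n |n|\,|a_n|^2$, as required.
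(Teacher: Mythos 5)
Your closing computation --- parametrising the critical curve by $r(q)=(q-1)^{-1/2}$, observing $\Phi(2)=0$ and $\Phi(q)\le 0$ for $q>2$, and reading the inequality off from $\Phi'(2^+)\le 0$ --- is correct, and it is the standard mechanism for passing from hypercontractivity back to a log-Sobolev inequality; the derivative does evaluate to $\tfrac12\int f^2\log f-\tfrac12\sum|n|\,|a_n|^2$ as you say. The problem is the input you feed into it. The estimate you call ``the central technical input,'' namely $\|P_rf\|_q\le\|f\|_2$ for all $q\ge 2$ with $r^2(q-1)\le 1$, is not an available black box, and in the cited paper the logical order is exactly the reverse of yours: Weissler proves Theorem~1 (the statement at hand) \emph{first}, by a direct Fourier-expansion argument exploiting the positivity of $f$, and the hypercontractive estimates for the Poisson semigroup are then derived from it (via Gross's equivalence between log-Sobolev inequalities and hypercontractivity) or established only for special exponents; the sharp $L^p\to L^q$ statement along $r^2=(p-1)/(q-1)$ in full generality is the well-known Weissler conjecture. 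Your two suggested shortcuts for proving it independently both fail: the circle carries no product structure, so there is no two-point inequality to tensorise (that mechanism is special to the discrete cube and Gauss space); and subordination writes $P_{e^{-t}}=\int_0^\infty e^{s\Delta}\,d\mu_t(s)$ as an average of heat operators whose individual $L^2\to L^q$ contractivity thresholds are governed by the log-Sobolev constant $4\pi^2$ of $-\Delta$ --- an average of operators, each of which is a contraction only past a varying threshold time, is not itself a contraction, so the sharp relation $r^2(q-1)\le 1$ does not survive the averaging. The one step you describe as ``the only non-elementary step'' is therefore at least as strong as the theorem being proved, and the argument is circular as written.

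Two smaller points. Truncating the Fourier series does not preserve nonnegativity (partial sums of the Fourier series of a nonnegative function can be negative, since the Dirichlet kernel changes sign); approximate instead by Fej\'er means or by $P_\rho f$ with $\rho\uparrow 1$, both of which preserve positivity and only decrease the right-hand side term by term. And if your aim is only what this paper actually uses, the weaker inequality with $\sum n^2|a_n|^2$ in place of $\sum|n|\,|a_n|^2$ suffices and admits self-contained proofs (Rothaus's variational argument, Emery and Yukich's semigroup estimate); it is the $|n|$ version stated here that genuinely requires Weissler's direct Fourier-analytic attack.
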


\raggedright Obviously the above inequality is stronger than the following inequality:
$$
\int f^2 \log f \leq \sum_{n=-\infty}^{\infty} |n|^2|a_n|^2 +||f||_2^2 \log ||f||_2
$$

\raggedright which is equivalent to Proposition \ref{logSobolev_Circle} by change of variables as in Corollary \ref{logSobolev_arbitary_interval}.

$\;\;\;\;\;$ Weissler \cite{weissler} cited Rothaus's previous 1978 paper \cite{rothaus_1978} but did not have Rothaus's 1980 paper \cite{rothaus_1980} where Rothaus gave his proof of Proposition \ref{logSobolev_Circle}.

\subsection{Rothaus's Proof}
Rothaus proved Proposition \ref{logSobolev_Circle} by a variational method. He considered an equivalent problem with a positive parameter $\rho$ \cite[Section 4]{rothaus_1980}. If a related constant $b_{\rho}$ is zero then the log-Sobolev inequality on the circle with the constant $2/{\rho}$ holds.  Therefore, Proposition \ref{logSobolev_Circle} is equivalent to showing that $b_{1/2\pi^2}$ is zero. 

$\;\;\;\;\;$ For each $b_{\rho}$, he showed that a minimizing function exists, is positive and satisfies a related differential equation \cite[Thm. 4.2]{rothaus_1980}.  Moreover, for $\rho>1/2\pi^2$, the only positive solution to the differential equation is the constant function 1 \cite[Thm. 4.3]{rothaus_1980} and hence $b_{\rho}$ is zero. Therefore in the limit  $b_{1/2\pi^2}$ is zero, and Proposition \ref{logSobolev_Circle} follows.

$\;\;\;\;\;$ Rothaus \cite{rothaus_1980} cited Weissler's paper \cite{weissler} and said that "A result related to Theorem 6.3 appears in \cite{weissler}."

\subsection{Emery and Yukich's Proof}
Emery and Yukich \cite[1987, p. 1]{emery&yukich} proved Proposition \ref{logSobolev_Circle} by using estimates deploying the Brownian motion semi-group.

$\;\;\;\;\;$ Emery and Yukich \cite{emery&yukich} cited both Weissler \cite{weissler} and Rothaus \cite{rothaus_1980}.

\subsection{Deuschel and Stroock's Proof}
Deuschel and Stroock considered the log-Sobolev inequality in general spaces with densities.  As a special case they  proved \cite[Rmk. 1.14(i)]{deuschel} that the log-Sobolev constant for the circle of length 1 with Lebesgue measure is the first eigenvalue of the Laplacian, namely $4\pi^2$ $\left(\text{corresponding to the first eigenfunction} \sin 2\pi x\right)$. 

$\;\;\;\;\;$ Deuschel and Stroock \cite{deuschel} cited Emery and Yukich \cite{emery&yukich}.

\bibliographystyle{abbrv}
\bibliography{main}

\begin{thebibliography}{9}


\bibitem[DS]{deuschel}{J.D. Deuschel and D.W. Stroock, Hypercontractivity and spectral gap of symmetric diffusions with applications to the stochastic Ising models, J. Funct. Anal. 92 (1990), 30-48.}
\bibitem[DHHT]{g09} {Alexander D\'{i}az, Nate Harman, Sean Howe, David Thompson, Isoperimetric problems in sectors with density, Adv. Geom., to appear (2010). Available at http://arxiv.org/abs/1012.0450}.

\bibitem[EY]{emery&yukich}{M. Emery and J. E. Yukich, A simple proof of the logarithmic sobolev inequality on the circle, S\'{e}minaire De Probabilit\'{e}s XXI, Lecture Notes in Mathematics. 1247 (1987), 173-175.}
\bibitem[F]{federbush}{P. Federbush, A partially alternate derivation of a result of Nelson, J. Physics 10 (1969), 50-52.}
\bibitem[G]{gross_gauss}{L. Gross, Logarithmic Sobolev inequalities, J. Math. 97 No. 4 (Winter 1975), 1061-1083.}
\bibitem[M]{Morgbl} {Frank Morgan's personal blog, http://sites.williams.edu/Morgan/2009/06/11/sobolev-type-inequality/.}
\bibitem[P]{perelman}{Grisha Perelman, The entropy formula for the Ricci flow and its geometric applications, arXiv.org (2002).}
\bibitem[R1]{rothaus_1980} {O. S. Rothaus, Logarithmic Sobolev inequalities and the spectrum of Sturm-Liouville operators, J. Funct. Anal. 39 (1980), 42-56.}
\bibitem[R2]{rothaus_1978}{O. S. Rothaus, Lower bounds for eigenvalues of regular Sturm-Liouville operators and the logarithmic Sobolev Inequality, Duke. Math. Journal. 45 2 (1978), 351-362.}
\bibitem[S]{stam}{A. J. Stam, Some inequalities satisfied by the quantities of information of Fisher and Shannon, Indo. Control 2 (1959), 101-112.}
\bibitem[Wa]{wang}{Feng-Yu Wang, Harnack inequalities for log-Sobolev functions and estimates of log-Sobolev constants, Ann. Prob. 27 (1999), 653-663}.
\bibitem[We]{weissler}{Fred B. Weissler, Logarithmic Sobolev inequalities and hypercontractive estimates on the circle, J. Funct. Anal. 37 (1980), 218-234.}
\end{thebibliography}

\bigskip
Whan Ghang
\newline 
Department of Mathematics
\newline 
Massachusetts Institute of Technology
\newline
ghangh@MIT.edu
\newline
\newline
Zane Martin
\newline 
Department of Mathematics and Statistics 
\newline 
Williams College 
\newline
zkm1@williams.edu
\newline
\newline
Steven Waruhiu
\newline 
Department of Mathematics
\newline 
University of Chicago
\newline
waruhius@uchicago.edu

\end{document}